\newtheorem{thm}{Theorem}
\newtheorem{cor}[thm]{Corollary}
\newtheorem{lem}[thm]{Lemma}
\newtheorem{defi}[thm]{Definition}
\theoremstyle{remark}
\newtheorem{rem}[thm]{Remark}
\newtheorem{exa}[thm]{Example}
\newcommand{\calo}{{\mathcal O}}
\newcommand{\ideala}{{\mathfrak a}}
\newcommand{\idealb}{{\mathfrak b}}
\newcommand{\idealp}{{\mathfrak p}}
\newcommand{\idealq}{{\mathfrak q}}
\newcommand{\idealm}{{\mathfrak m}}
\newcommand{\spec}{{\rm Spec}}
\newcommand{\derk}{{\rm Der_k}}
\newcommand{\trd}{{\rm trdeg}}
\newcommand{\codim}{{\rm codim}}
\newcommand{\C}{{\mathbb C}}
\newcommand{\tor}{\xymatrix{\ar@{-->}[r]&}}
\begin{document}

\author{Rene Baltazar and Ivan Pan}

\thanks{Research of R. Baltazar  was partially supported by CAPES of Brazil}\thanks{Research of I. Pan was partially supported by ANII of Uruguay}

\title[On solutions for derivations]{On solutions for derivations of a Noetherian $k$-algebra and local simplicity}
\maketitle

\begin{abstract}
We introduce a general notion of solution for  a Noetherian differential $k$-algebra and study its relationship with simplicity, where $k$ is an  algebraically closed field; then we analyze conditions under which such solutions may exist and be unique, with special emphasis in the cases of $k$-algebras of finite type and formal series rings over $k$.  Using that notion we generalize a criterion for simplicity due to  Brumatti-Lequain-Levcovitz and give a geometric characterization of that; as an application we give a new proof of a classification theorem for local simplicity due to Hart and obtain a general result for simplicity of formal series rings over $k$.                  

\end{abstract}
\section{introduction}

In the classical theory of complex (or real) ordinary differential equations the Existence and Uniqueness Theorem asserts that if $D$ is an analytic vector field on $\C^n$ and $P\in \C^n$ is not a zero of $D$, then there exists an analytic map  $\gamma:\Delta\to \C^n$, where $\Delta\subset \C$ is an open disk containing $0$, such that $\gamma(0)=P=(p_1,\ldots,p_n)$ and $\gamma'(t)=D(\gamma(t)), t\in \Delta$.

Denote by $\calo_{n,P}$ the ring of germs of analytic functions at $P\in\C^n$ and think of $D$ as a derivation of that ring; hence $D=\sum_{i=1}^n f_i \partial_{z_i}$, where $f_i\in \calo_{n,P}$ and $z_1,\ldots, z_n$ are coordinates for $\C^n$. A solution as above is given by mean of $n$ functions $z_1(t),\ldots,z_n(t)$, which are  analytic in a neighborhood of $0$, and such that 
\[z'_i(t)=f_i(z_1(t),\ldots,z_n(t)), z_i(0)=p_i, i=1,\ldots,n.\] 
Since an element in $\calo_{n,P}$ may be represented as a power series in $z_1-p_1,\ldots,z_n-p_n$, with positive convergence radius, we obtain a unique $\C$-homomorphism $\varphi:\calo_{n,P}\to \calo_{1,0}$ which maps $z_i$ to $z_i(t)$; for an element  $g\in\calo_{n,P}$ we have $\varphi(g)(0)=0$ if and only if $g(P)=0$. Notice that, conversely, such a $\C$-homomorphism determines a unique solution. Motivated by this remark one may consider a $k$-derivation $D$ of an abstract $k$-algebra and say what a solution of $D$ means (see Definition \ref{defi1.1}). 

In \cite{Sei} one finds more or less implicitly, and besides a lot of important results, the essential material to study a more general notion of solution.  More explicitly,  \cite{Hart75} and \cite{BLL} consider solutions of suitable algebraic differential equations associated to a $k$-derivation to study simplicity of local differential rings. Furthermore, such a simplicity is characterized in \cite{Hart74} for an interesting class of local rings. The reader may also consult \cite[Thm 1.6.1]{Now} where there is a version of the Existence and Uniqueness Theorem for formal non autonomic differential equation systems.

In this work, which was inspired by \cite{BLL}, we introduce and study systematically a general notion of solution associated to a Noetherian differential $k$-algebra and its relationship with simplicity, for an algebraically closed field $k$. More precisely, in \S 2 we analyze conditions under which such solutions may exist and be unique with special emphasis in the cases of $k$-algebras of finite type and formal series rings over $k$ (Theorem \ref{thm1.2}). In \S 3  we first generalize the simplicity criterion given in \cite{BLL} and give a geometric characterization of that (Theorem \ref{thm2.1} and Corollary \ref{cor2.1_1}); next we give a new proof of the classification theorem for local simplicity \cite[Thm. 2]{Hart74} and obtain a general result for simplicity of formal series rings over $k$ (Corollary \ref{cor2.1_2} and Theorem \ref{thm2.2}).                  

\section{Existence and uniqueness Theorem} Let $k$ be an algebraically closed field of characteristic 0 and let $R$ be a $k$-algebra. We denote by $\derk(R)$ the $k$-vector space of $k$-derivations of $R$. If $R=k[[t]]$ we denote by $\partial_t$ the canonical derivation in $\derk(k[[t]])$.

Recall that an element $D\in\derk(R)$ extends in a unique form to a $k$-derivation in the total quotient ring of $R$ by the formula $D(a/s)=D(a)s^{-1}-as^{-2}D(s)$; then it also extends to any localization of $R$.   

\begin{defi}\label{defi1.1}
Let $D\in\derk(R)$ be a derivation and let $\idealp\in\spec(R)$ be a prime ideal; denote by $k(\idealp)$ the residue field of $\idealp$. A $k$-homomorphism $\varphi:R\to k(\idealp)[[t]]$ is said to be a solution of $D$ passing through $\idealp$ if $\varphi\circ D=\partial_t\circ\varphi$ and $\varphi^{-1}((t))=\idealp$.
When $\varphi(R)\not\subset k(\idealp)$ we say the solution is nontrivial. 
\end{defi}

A solution $\varphi:R\to K[[t]]$ as above, $K=k(\idealp)$, factorizes through the localization map $R\to R_\idealp$ to give a solution $\varphi_\idealp:R_\idealp\to K[[t]]$ passing through $\idealp R_\idealp$ (of $D$ thought as a derivation $R_\idealp\to R_\idealp$).

\begin{rem}
A solution $\varphi$ as in the definition above is trivial if and only if it induces a monomorphism $R/\ker\varphi\to k(\idealp)$; this signifies $\idealp=\ker\varphi$ is a maximal ideal.  
\end{rem}

If $D\in\derk(R)$, as pointed out in \cite{Sei} we may extend $D$ to an element in $\derk(R[[t]])$ and define the exponential $R$-automorphism $e^{tD}:R[[t]]\to R[[t]]$ given by
\[\alpha\mapsto \sum_{n=0}^\infty \frac{t^n}{n!}D^n(\alpha),\]
where $D^n=D^{n-1}\circ D$, for $n\geq 0$, and $D^0=Id$. Notice that $e^{tD}$ restricts to $R$ to give a $k$-homomorphism $R\to R[[t]]$.

In the sequel we think of $D$ as an element in $\derk(R)$ or $\derk(R[[t]])$ according to our convenience. 

Denote by $\epsilon_\idealp:R\to k(\idealp)=R_\idealp/\idealp R_\idealp$ the canonical map and let $\epsilon_\idealp\otimes 1:R\otimes _kk[[t]]=R[[t]]\to k(\idealp)\otimes_k k[[t]]=k(\idealp)[[t]]$ be its natural extension to power series. If $D\in\derk(R)$, then $\epsilon_\idealp\circ D$ is a $\epsilon_\idealp$-derivation of $R$.  We have:

\begin{lem}\label{lem1.1}
The map $(\epsilon_\idealp\otimes 1)\circ e^{tD}|_{R}:R\to k(\idealp)[[t]]$ defines a solution of $D$ passing through $\idealp$. Moreover, that solution is nontrivial if and only if $\epsilon_\idealp\circ D\not=0$.
\end{lem}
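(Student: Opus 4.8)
The statement splits into two parts: first that $\varphi := (\epsilon_\idealp\otimes 1)\circ e^{tD}|_R$ is a solution of $D$ passing through $\idealp$ in the sense of Definition \ref{defi1.1}, and second the nontriviality criterion. For the first part I would verify three things in turn: that $\varphi$ is a $k$-homomorphism, that it satisfies the intertwining relation $\varphi\circ D=\partial_t\circ\varphi$, and that $\varphi^{-1}((t))=\idealp$. The homomorphism property is immediate, since $\varphi$ is the composite of the $k$-homomorphism $e^{tD}|_R:R\to R[[t]]$ noted just before the lemma with the coefficientwise extension $\epsilon_\idealp\otimes 1:R[[t]]\to k(\idealp)[[t]]$, itself a $k$-homomorphism.

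The core of the argument is the intertwining relation, and the plan is to deduce it from a formal flow identity on $R[[t]]$. Differentiating $e^{tD}(g)=\sum_{n\ge 0}\frac{t^n}{n!}D^n(g)$ termwise in $t$ and shifting the summation index, I expect to obtain $\partial_t\circ e^{tD}|_R=e^{tD}|_R\circ D$ as maps $R\to R[[t]]$, where here $\partial_t$ is the canonical derivation of $R[[t]]$. Composing on the left with $\epsilon_\idealp\otimes 1$ then finishes the job: since that map acts only on coefficients it commutes with $\partial_t$, so $\varphi\circ D=(\epsilon_\idealp\otimes 1)\circ\partial_t\circ e^{tD}|_R=\partial_t\circ\varphi$. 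This is the step where the characteristic-zero hypothesis and the very definition of $e^{tD}$ enter, and the only genuine point of care is keeping straight which copy of $\partial_t$ (on $R[[t]]$ or on $k(\idealp)[[t]]$) is meant at each stage.

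For the condition $\varphi^{-1}((t))=\idealp$ I would simply read off constant terms. For $g\in R$ one has $\varphi(g)=\sum_{n\ge 0}\frac{t^n}{n!}\,\epsilon_\idealp(D^n(g))$, whose coefficient of $t^0$ is $\epsilon_\idealp(g)$. Hence $\varphi(g)\in(t)$ if and only if $\epsilon_\idealp(g)=0$, that is, if and only if $g\in\ker\epsilon_\idealp=\idealp$, which is precisely the desired equality of ideals.

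Finally, the nontriviality criterion is visible in the same expansion, since the coefficient of $t$ in $\varphi(g)$ is $\epsilon_\idealp(D(g))$. If $\epsilon_\idealp\circ D\neq 0$, choosing $g$ with $\epsilon_\idealp(D(g))\neq 0$ yields $\varphi(g)\notin k(\idealp)$, so the solution is nontrivial. Conversely, if $\epsilon_\idealp\circ D=0$, equivalently $D(R)\subset\idealp$, then for every $n\ge 1$ and every $g$ we have $D^n(g)=D(D^{n-1}(g))\in D(R)\subset\idealp$, so all coefficients of $\varphi(g)$ of degree $\ge 1$ vanish and $\varphi(g)=\epsilon_\idealp(g)\in k(\idealp)$; thus $\varphi(R)\subset k(\idealp)$ and the solution is trivial. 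The main obstacle, such as it is, remains the flow identity of the second paragraph; the remaining verifications are routine bookkeeping on power-series coefficients.
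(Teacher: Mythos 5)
Your proposal is correct and follows essentially the same route as the paper: the intertwining relation is obtained by the same termwise differentiation and index shift in $\sum_n \frac{t^n}{n!}\epsilon_\idealp(D^n(\alpha))$ (the paper performs the computation after applying $\epsilon_\idealp\otimes 1$, you before — an immaterial difference), the equality $\varphi^{-1}((t))=\idealp$ is read off from the constant term, and nontriviality from the linear term $\epsilon_\idealp(D(\alpha))$ together with the observation that $\epsilon_\idealp\circ D=0$ forces $\epsilon_\idealp\circ D^n=0$ for all $n\geq 1$.
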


\begin{proof}
For $\alpha\in R$ we have
\begin{eqnarray*}
((\epsilon_\idealp\otimes 1)\circ e^{tD}\circ D)(\alpha)&=&\sum_{n=0}^\infty \frac{t^n}{n!}\epsilon_\idealp(D^{n+1}(\alpha))\\
&=&\partial_t\left(\sum_{n=-1}^\infty \frac{t^{n+1}}{(n+1)!}\epsilon_\idealp(D^{n+1}(\alpha))\right)\\
&=&\left(\partial_t\circ(\epsilon_\idealp\otimes 1)\circ e^{tD}\right)(\alpha).
\end{eqnarray*}
Clearly $(\epsilon_\idealp\otimes 1)\circ e^{tD}(\alpha)\in tk(\idealp)[[t]]$ if and only if $\epsilon_\idealp(\alpha)=0$, that is, if and only if $\alpha\in\idealp$. Moreover, if for an element $\alpha\in R$ we get $\epsilon_\idealp(D(\alpha))\neq 0$, then the linear term in the power series $\sum_{n=0}^\infty \frac{t^n}{n!}\epsilon_\idealp(D^{n}(\alpha))$ does not vanish; notice that $\epsilon_\idealp\circ D=0$ implies $\epsilon_\idealp\circ D^n=0, \ \forall n\geq 0$.  Putting all together we obtain the proof.
\end{proof}

\begin{rem}
All derivations admit at least a solution passing through a given prime ideal by Lemma \ref{lem1.1}. However, as shown in Example \ref{exa1.1}, for a given derivation all such solutions may be trivial. 
\end{rem}

If  $f:A\to B$ is a homomorphism of commutative rings, then $f^*:\spec(B)\to \spec(A)$ denotes the map $\idealp\mapsto \idealp^c:=f^{-1}(\idealp)$. It is continuous with respect to the related Zariski topologies. Recall that the correspondences $A\mapsto \spec(A)$ and $f\mapsto f^*$ induce an equivalence between the categories of $k$-algebras of finite type and affine varieties over $\spec(k)$, since $k$ is algebraically closed (Nullstelensatz).

\begin{defi}
Two $k$-homomorphisms $\varphi,\psi:R\to K[[t]]$ are said to be topologically equal if $\varphi^*=\psi^*$.  
\end{defi}

\begin{exa}
For $n\geq 1$ define the $k$-homomorphism $f_n:k[x]\to k[[t]]$ given by $x\mapsto t^n$. Then $f_n=f_m$ implies $n=m$, but  $f_n^*=f_m^*$ for all $m,n\geq 1$. 
\end{exa}

We denote by $k[[x_1,\ldots,x_n]]$ the power series $k$-algebra in $n$ indeterminates.
 
\begin{thm}\label{thm1.2}
Assume that $R$ is Noetherian and let $\idealp\in\spec(R)$. Let $D\in\derk(R)$ be a derivation. Then:

a) $D$ admits a solution passing through $\idealp$ which  is nontrivial if and only if $\epsilon_\idealp\circ D\neq 0$.

b) Two solutions of $D$ passing through $\idealp$ are topologically equal; in particular, if one of these is trivial, then the other one is too. 

c) If in addition $\idealm=\idealp$ is a maximal ideal and $R$ is either of finite type or a quotient algebra of the form $k[[x_1,\ldots,x_n]]/I$, where $I$ is an ideal of power series, then $D$ admits a unique solution passing through $\idealm$.
\end{thm}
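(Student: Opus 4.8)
The plan is to reduce all three parts to one structural fact: a solution is completely determined by its value at $t=0$. Given any solution $\varphi:R\to k(\idealp)[[t]]$ passing through $\idealp$, set $\bar\varphi:=\mathrm{ev}_0\circ\varphi:R\to k(\idealp)$, the composite of $\varphi$ with evaluation at $t=0$; this is a $k$-homomorphism, and since $\mathrm{ev}_0$ has kernel $(t)$ the hypothesis $\varphi^{-1}((t))=\idealp$ says exactly that $\ker\bar\varphi=\idealp$. Comparing coefficients of $t^n$ in the relation $\partial_t\circ\varphi=\varphi\circ D$ gives, by an immediate induction,
\[
\varphi(a)=\sum_{n\ge 0}\frac{t^n}{n!}\,\bar\varphi(D^n(a)),\qquad a\in R .
\]
Thus $\varphi$ is recovered from the single $k$-homomorphism $\bar\varphi:R\to k(\idealp)$ with $\ker\bar\varphi=\idealp$. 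This identity is the engine for everything that follows, and I note that it uses only that $\varphi$ intertwines $D$ with $\partial_t$.

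For part (a), existence is Lemma \ref{lem1.1}. From the displayed formula, $\varphi$ is nontrivial, i.e.\ $\varphi(R)\not\subset k(\idealp)$, precisely when $\bar\varphi(D^n(a))\ne 0$ for some $a\in R$ and some $n\ge 1$; since $\ker\bar\varphi=\idealp$ this means $D^n(a)\notin\idealp$ for some such $a,n$. I would then observe that $D(R)\subseteq\idealp$ forces $D(\idealp)\subseteq\idealp$, hence $D^n(R)\subseteq\idealp$ for all $n\ge1$; consequently the condition ``$D^n(a)\notin\idealp$ for some $a,n\ge1$'' is equivalent to $D(R)\not\subseteq\idealp$, that is, to $\epsilon_\idealp\circ D\ne0$. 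So a nontrivial solution exists if and only if $\epsilon_\idealp\circ D\ne0$.

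For part (b), the same formula shows
\[
\ker\varphi=\{a\in R:\bar\varphi(D^n(a))=0\ \text{for all }n\ge0\}=\{a\in R:D^n(a)\in\idealp\ \text{for all }n\ge0\},
\]
where the second equality uses $\ker\bar\varphi=\idealp$. The right-hand set depends only on $D$ and $\idealp$, not on the chosen solution, so any two solutions $\varphi,\psi$ have equal kernels. Since $\spec(k(\idealp)[[t]])=\{(0),(t)\}$ and both send $(t)\mapsto\idealp$, equal kernels mean $\varphi^*=\psi^*$, i.e.\ they are topologically equal. The ``in particular'' is immediate from the analysis in (a): triviality of a solution is equivalent to the solution-free condition $\epsilon_\idealp\circ D=0$, so if one solution is trivial all are.

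Part (c) is where the hypotheses do real work; existence is again Lemma \ref{lem1.1}, so the whole point is uniqueness. When $R$ is of finite type and $\idealm$ is maximal, the Nullstellensatz gives $k(\idealm)=R/\idealm=k$; when $R=k[[x_1,\ldots,x_n]]/I$, the ring $R$ is local with residue field $k$, so again $k(\idealm)=k$. In either case a $k$-homomorphism $R\to k$ with kernel $\idealm$ factors through $R/\idealm=k$ and must be the canonical projection; hence $\bar\varphi=\epsilon_\idealm$ is forced, and the displayed formula makes $\varphi=(\epsilon_\idealm\otimes1)\circ e^{tD}|_R$ the unique solution. I expect the only real obstacle to be conceptual: recognizing that it is precisely the equality $k(\idealm)=k$ that pins down the initial condition $\bar\varphi$, and that uniqueness genuinely fails otherwise---e.g.\ if $R$ is a function field and $\idealm=(0)$, then $k(\idealm)\ne k$ and the many $k$-embeddings $R\hookrightarrow k(\idealm)$ yield many distinct solutions.
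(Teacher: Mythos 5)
Your proof is correct, and it runs on a different engine from the paper's. Your master formula $\varphi(a)=\sum_{n\ge0}\frac{t^n}{n!}\,\bar\varphi(D^n(a))$, with $\bar\varphi=\mathrm{ev}_0\circ\varphi$ (note this uses $\mathrm{char}\,k=0$), upgrades Lemma \ref{lem1.1} from a construction of one solution into a classification of all of them: every solution is the exponential attached to its initial condition $\bar\varphi$, a $k$-homomorphism $R\to k(\idealp)$ with $\ker\bar\varphi=\idealp$. The paper never isolates this fact, and the difference shows in (b) and (c). For (b), the paper invokes Seidenberg's theorem that a Noetherian $R$ has a unique largest $D$-stable (prime) ideal $\idealq\subseteq\idealp$ and shows every solution has kernel $\idealq$; your formula instead gives the explicit, solution-independent description $\ker\varphi=\{a\in R:\ D^n(a)\in\idealp\ \mbox{for all}\ n\ge0\}$, which needs neither Noetherianity nor an external citation (it is of course the same ideal $\idealq$; the paper's characterization of the kernel as the largest $D$-stable ideal inside $\idealp$ is reused later in the proof of Theorem \ref{thm2.1}, but your description identifies it just as well). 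For (c), the paper carries out a Taylor-expansion recursion in $k[[x_1,\ldots,x_n]]$, showing the coefficients of the $\varphi(x_i)$ are determined step by step by the $f_i$ and the initial point, with the finite-type case declared ``analogous''; you instead observe that in both cases $k(\idealm)=k$ (Nullstellensatz, resp.\ locality of the power-series quotient), so the initial condition is forced to equal $\epsilon_\idealm$ and the master formula pins down $\varphi$ as the solution of Lemma \ref{lem1.1}. Your route is shorter, treats both cases of (c) uniformly, and makes transparent exactly where the hypothesis acts: uniqueness is equivalent to rigidity of the initial condition, which fails as soon as $k(\idealp)\neq k$, as your closing example (and the paper's Examples \ref{exa1.1} and \ref{exa1.2}) illustrates.
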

\begin{proof}
The statement (a) follows from Lemma \ref{lem1.1}.

To prove (b) consider a solution $\varphi:R\to k(\idealp)[[t]]$ whose existence is assured by part (a). 

First of all we note that if $\varphi(\alpha)=0$, then $\varphi(D(\alpha))=\partial_t(\varphi(\alpha))=0$. Hence $D(\ker\varphi)\subset \ker\varphi$. Notice also that $\ker\varphi\subset \idealp$.

On the other hand, since $R$ is Noetherian there exists a unique prime ideal $\idealq\subset\idealp$ which is maximal among the ideals $\ideala\subset R$ satisfying $D(\ideala)\subset \ideala$ (see \cite[\S 3]{Sei}). From $D(\idealq)\subset \idealq$ we deduce $\partial_t(\varphi(\idealq))\subset \varphi(\idealq)$, which is contained in $tK[[t]]$. Since $\partial_t$ does not stabilize nontrivial ideals then  $\idealq\subset \ker\varphi$. We conclude that $\idealq$ is the kernel of any solution of $D$ passing through $\idealp$. Since, by definition, any solution contracts $tk[[t]]$ to $\idealp$, the statement (b) follows.

We prove (c) in the case $R=k[[x_1,\ldots,x_n]]/I$; the other case is analogous. By  Lemma \ref{lem1.1} we only need to prove the uniqueness statement. 

By assumption $D$ comes from an element $D_1\in \derk(k[[x_1,\ldots,x_n]])$ such that $D_1(I)\subset I$. We have $D_1=\sum_{i=1}^n f_i \partial /\partial x_i$, for some $f_i\in k[[x_1,\ldots,x_n]]$, $i=1,\ldots, n$. Let $\varphi: k[[x_1,\ldots,x_n]]\to k[[t]]$ be a solution of $D_1$ passing through a maximal ideal $M$ with $M/ I=\idealm$, and put $x_i(t):=\varphi(x_i)$, $i=1,\ldots,n$. Hence $M=(x_1-p_1,\ldots,x_n-p_n)$ where $p_i=x_i(0)$, $i=1,\ldots,n$.

Consider the $r$ truncation $k$-homomorphism $[\ \ ]_r:k[[t]]\to k[t]$, $r=0,1,2,\ldots$,  which to a power series $\sum_{i=0}^\infty a_i t^i$ associates  $\sum_{i=0}^r a_i t^i$. 

Recall that a power series $f\in k[[x_1,\ldots,x_n]]$ admits a Taylor development, around $p=(p_1,\ldots,p_n)$, as
\[f(x)=\sum_{j=0}^\infty \lambda^j(x-p),\]
where $x=(x_1,\ldots,x_n), \lambda^0(x-p)=f(p)$ and $\lambda^j\in k[x_1,\ldots,x_n]$ is a suitable homogeneous polynomial of degree $j$, for $j\geq 1$. Therefore 
\begin{equation}
f(x(t))\equiv \sum_{j=0}^r \lambda^j([x_1(t)-p_1]_r,\ldots,[x_n(t)-p_n]_r)\ \mbox{mod} (t^{r+1}).\label{eq1.1}
\end{equation}

Now, from  $\varphi\circ D_1=\partial_t\circ \varphi$ it follows 
\[f_i(x(t))=\partial_t x_i(t),\ i=1,\ldots n.\]
By applying  (\ref{eq1.1}) to $f_1,\ldots,f_n$ we deduce that the coefficient of the degree $r$ term of $\partial_t x_i(t)$ is determined by a finite number of coefficients of $f_i$ and the coefficients of $[x_1(t)-p_1]_{r},\ldots, [x_n(t)-p_n]_{r}$, for all $i=1,\ldots,n$. This proves that $\varphi$ is uniquely determined by the $f_j's$ and $p$. Since $D_1$ stabilizes $I$ we easily deduce that $\varphi$ factorizes through $k[[x_1,\ldots,x_n]]/I$ to give a (unique) solution of $D$ passing through $\idealm=M/I$. This completes the proof.

\end{proof}

Let $i$ be a nonnegative integer number. Following \cite{Hart74} we say $R$ is  $i$-singular at $\idealp\in\spec(R)$  if $\idealp R_\idealp$ can not be generated by $i+\dim R_\idealp$ elements. 

\begin{cor}
Suppose that  $R$ is either complete or a localization of a $k$-algebra of finite type; let $D\in\derk(R)$. All solutions of $D$ passing through a minimal $i$-singular prime is trivial. 
\end{cor}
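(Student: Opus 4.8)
The plan is to combine Theorem \ref{thm1.2} with the classical fact that a derivation stabilises the singular locus, reducing the whole statement to a computation on a residue field. Write $\varphi$ for a solution of $D$ passing through the minimal $i$-singular prime $\idealp$. By Lemma \ref{lem1.1} and Theorem \ref{thm1.2}(a) the canonical solution is trivial exactly when $\epsilon_\idealp\circ D=0$, that is, when $D(R)\subseteq\idealp$; and by Theorem \ref{thm1.2}(b) triviality is a property shared by \emph{every} solution through $\idealp$. Hence it suffices to establish the single inclusion $D(R)\subseteq\idealp$.

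First I would isolate the kernel. The argument proving Theorem \ref{thm1.2}(b) shows that $\ker\varphi=\idealq$, the largest $D$-invariant ideal contained in $\idealp$. Thus, \emph{if} $\idealp$ is itself $D$-invariant, then $\idealq=\idealp$, so $\ker\varphi=\idealp$ and $\varphi$ factors as $R/\idealp\hookrightarrow k(\idealp)[[t]]$; moreover $D$ descends to a derivation $\bar D\in\derk(R/\idealp)$, which extends to $k(\idealp)$. In these terms triviality of $\varphi$ is equivalent to the vanishing of $\bar D$ on $k(\idealp)$, so the task splits into (i) proving $D(\idealp)\subseteq\idealp$ and (ii) proving $\bar D=0$.

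The heart of the matter is step (i). The locus of $i$-singular primes is a closed subset of $\spec(R)$ cut out, up to radical, by a $D$-stable ideal $J$: it is built from the Fitting ideals $\mathrm{Fitt}_j(\Omega_{R/k})$ of the module of K\"ahler differentials, and a $k$-derivation of $R$ induces a Lie derivative $\mathcal{L}_D$ on $\Omega_{R/k}$ under which these determinantal ideals are stable, whence $D(J)\subseteq J$. By Seidenberg's theorem (see \cite{Sei}) $D$ then stabilises $\sqrt{J}=J$, and since $\mathrm{char}\,k=0$ every minimal prime of the $D$-stable radical ideal $J$ is again $D$-stable; in particular $D(\idealp)\subseteq\idealp$. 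One may instead quote the corresponding invariance statement directly from \cite{Hart74}, whose framework for $i$-singularity we are following.

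It remains to carry out step (ii), passing from $D(\idealp)\subseteq\idealp$ to $D(R)\subseteq\idealp$; this is where the standing hypotheses on $R$ and the role of the residue field enter. The point to verify is that $\bar D$ vanishes on $k(\idealp)$: when $\idealp$ corresponds to a closed point, so that $k(\idealp)=k$, one has $\bar D\in\derk(k)=0$ automatically, giving $D(R)\subseteq\idealp$ and hence triviality by the first paragraph. I expect step (i) — identifying a $D$-stable ideal defining the $i$-singular locus and invoking Seidenberg's stability of radicals — to be the genuine obstacle, since it is the only nonformal ingredient; the descent to $\bar D$ and the vanishing $\derk(k)=0$ are then immediate, and the delicate bookkeeping is simply making sure that the residue-field hypothesis at the minimal $i$-singular prime is what forces $\bar D=0$.
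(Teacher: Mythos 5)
Your first paragraph and step (i) reproduce the paper's own argument: the paper cites \cite[Thm. 1]{Hart74} to get $D(\idealp)\subseteq\idealp$ and then appeals to Theorem \ref{thm1.2}(a) and (b). The genuine gap in your proposal is step (ii), which you only complete when $k(\idealp)=k$, i.e.\ when $\idealp$ is a maximal ideal. Nothing in the statement forces a minimal $i$-singular prime to be maximal: a variety can be singular along a positive-dimensional locus, and then $k(\idealp)$ is transcendental over $k$, so $\derk(k(\idealp))\neq 0$ and the vanishing argument for $\bar D$ is unavailable. This is not bookkeeping that can be deferred; it is the whole difficulty, because for non-maximal $\idealp$ the implication you need --- that $D(\idealp)\subseteq\idealp$ forces $D(R)\subseteq\idealp$ --- is simply false.

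Concretely, let $R=k[x,y,z]/(xy)$ and $\idealp=(x,y)$. Then $\idealp$ is the unique minimal $0$-singular prime (the two planes meet along the line $V(x,y)$, and $\idealp R_\idealp$ needs two generators while $\dim R_\idealp=1$), and $\idealp$ is not maximal. The derivation $D=z\partial_z$ is well defined on $R$ (it kills $xy$) and stabilizes $\idealp$, as Hart's theorem predicts; but $D(z)=z\notin\idealp$, so $\epsilon_\idealp\circ D\neq 0$, and $x\mapsto 0$, $y\mapsto 0$, $z\mapsto \bar{z}\,e^{t}$ is a nontrivial solution of $D$ passing through $\idealp$, where $\bar{z}$ denotes the class of $z$ in $k(\idealp)=k(z)$. (If one insists that $R$ be local, localize everything at $(x,y,z)$; nothing changes.) You should also be aware that the paper's proof is three lines long and makes exactly the leap you isolated: from $D(\idealp)\subseteq\idealp$ it concludes via Theorem \ref{thm1.2}(a) that a trivial solution exists, whereas that theorem requires the strictly stronger condition $\epsilon_\idealp\circ D=0$. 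So the point where your attempt stalls is precisely where the published argument --- and, in view of the example above, the statement itself, unless $\idealp$ is assumed maximal (e.g.\ an isolated singular point, where your residue-field argument does close the proof) --- breaks down.
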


\begin{proof}
By \cite[Thm. 1]{Hart74} we know $D$ stabilizes all minimal $i$-singular primes, $i\geq 0$. Theorem \ref{thm1.2}(a) implies there is a trivial solution passing through $\idealp$. The assertion follows from \ref{thm1.2}(b).   
\end{proof}

\begin{exa}\label{exa1.1}
Consider $R=k[x,y,z]$, $\idealp=(x,y)$ and the derivation $D=y\partial_x+xz\partial_y$. Hence $R/\idealp$ may be identified with $k[z]$ in such a way that $k(\idealp)=k(z)$ is the field of rational functions in one indeterminate.   

First of all note that the solution of $D$ (passing through $\idealp$) given by Lemma \ref{lem1.1} is defined by
\[x\mapsto 0, y\mapsto 0, z\mapsto z.\]
More generally, if $f\in k(z)$, a $k$ homomorphism given by 
\[x\mapsto 0, y\mapsto 0, z\mapsto f\]
 defines a solution of $D$ passing through $\idealp$. All these solutions are trivial (to compare with Theorem \ref{thm1.2}(b)).

On the other hand, a maximal ideal of $R$ is an ideal of the form  $\idealm_p=(x-a,y-b,z-c)$ for some $p=(a,b,c)\in k^3$.  The unique solution of $D$ passing through $\idealm_p$ is a $k$-homomorphism  $\varphi_p:R\to k[[t]]$ such that  $\varphi_p\circ D=\partial_t\circ \varphi_p$. In other words, we have 
\begin{equation}
y(t)=\partial_t x(t), x(t)z(t)=\partial_t y(t), 0=\partial_t z(t),\label{eq1.2}
\end{equation}
where $x(t):=\varphi_p(x), y(t):=\varphi_p(y), z(t):=\varphi_p(z)$. 

If $\idealm_p\supset\idealp$, then $p=(0,0,c)$ and $x(t)=0, y(t)=0, z(t)=c$ satisfy (\ref{eq1.2}). Otherwise, $p=(a,b,c)$ with $a\neq 0$ or $b\neq 0$. Since $D^{2m+1}(x)=yz^{m}, D^{2m}(x)=xz^{m}$, $D^{2m+1}(y)=xz^{m+1}$ and $D^{2m}(x)=yz^{m}$, we deduce that the unique solution passing through $\idealm_p\not\supset\idealp$ is given by $z(t)=c$ and
\[x(t)=\sum_{m=0}^\infty \left\{\frac{ac^m}{(2m)!}t^{2m}+\frac{bc^m}{(2m+1)!}t^{2m+1}\right\}, y(t)=\sum_{m=0}^\infty \left\{\frac{bc^m}{(2m)!}t^{2m}+\frac{ac^m}{(2m+1)!}t^{2m+1}\right\}.\]   
\end{exa}

\begin{exa}\label{exa1.2} Let $R$, $\idealp$ and $\idealm_p$ be as in the precedent example and let $D=\partial_x+\partial_y+\partial_z$. In this case the solutions passing through $\idealp$ are of the form
\[x\mapsto t, y\mapsto t, z\mapsto t+f\]
where $f\in k(z)$. Analogously $\varphi_p(x)=a+t, \varphi_p(y)=b+t, \varphi_p(z)=c+t$ is the unique solution passing through $\idealm_p$. 
\end{exa}

\section{On simplicity for local Noetherian $k$-algebras}

A (Noetherian) \emph{differential ring} is a pair $(R,D)$, where $R$ is a Noetherian $k$-algebra and $D\in\derk(R)$. Given two differential rings $(R_1,D_1), (R_2,D_2)$ a $k$-homomorphism $\psi:R_1\to R_2$ is said to be a \emph{morphism} of differential rings if $D_2\circ \psi=\psi\circ D_1$; we also write $\psi:(R_1,D_1)\to (R_2,D_2)$.  When $\psi$ is an isomorphism we say $(R_1,D_1)$ and  $(R_2,D_2)$ are isomorphic. 

Notice that a solution $\varphi:R\to k(\idealp)[[t]]$ of a derivation $D\in\derk(R)$, passing through a prime ideal $\idealp\in\spec(R)$, is nothing that a morphism $(R,D)\to (k(\idealp)[[t]],\partial_t)$, where one specifies the contraction of the maximal ideal. In particular, if $\psi:(R',D')\to (R,D)$ is a morphism, then $\varphi\circ\psi$ is a solution of $D'$ passing through $\psi^{-1}(\idealp)$.

\begin{defi}
A differential ring $(R,D)$ is said to be simple if the unique ideals stable under $D$ are $(0)$ and $R$. 
\end{defi}

Recall that a commutative ring is said to be \emph{reduced} if it has no nontrivial nilpotent elements.

Let $A$ be a commutative ring. For an ideal $\ideala\subset A$ we put $V(\ideala):=\{\idealq\in\spec(A); \idealq\supset\ideala\}$: it is the Zariski closed set associated to $\ideala$; note that for a prime ideal $\ideala=\idealq$,  $V(\idealq)$ is the Zariski closure in $\spec(A)$ of the single set $\{\idealq\}$. The dimension $\dim V(\ideala)$ of $V(\ideala)$ is $\dim A/\ideala$. If $\idealb\subset\ideala$ one obtains $V(\ideala)\subset V(\idealb)$ and $\dim V(\idealb)-\dim V(\ideala)$ is said to be the \emph{codimension} of $V(\ideala)$ in  $V(\idealb)$, denoted by $\codim(V(\ideala),V(\idealb))$.

If in addition $A$ is a $K$-algebra, where $K$ is a field,  we denote by $\trd_K A$ the transcendence degree of $A$ over $K$.

\begin{thm}\label{thm2.1}
Let $(R,D)$ be a differential ring, let $\idealp\in\spec(R)$ be a prime ideal in $R$. Assume that there exists a nontrivial solution $\varphi:R\to K[[t]]$ of $D$, $K=k(\idealp)$, passing through $\idealp$. The following assertions are equivalent:

(a) $(R_\idealp, D)$ is simple

(b) $\varphi_\idealp$ is one to one.

(c) $R_\idealp$ is reduced and the image of $\varphi_\idealp^*$ is dense in $\spec(R_\idealp)$.

(d) $\ker\varphi$ is the unique minimal prime contained in $\idealp$ and there is $u\in R\backslash \idealp$ such that $u\ker\varphi=0$.

If, in addition, $R$ is of finite type, then $(a), (b)$ and $(c)$ are equivalent to

(e) $R_\idealp$ is reduced and there is a unique irreducible component $X$ of $\spec(R)$ containing $V(\idealp)$ such that $\trd_K\varphi(R)=\codim(V(\idealp),X)$.
\end{thm}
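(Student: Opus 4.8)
The plan is to funnel all four (respectively five) conditions through the single ideal $\ker\varphi_\idealp$ of the local ring $R_\idealp$. First I would record the structural facts about this ideal already implicit in the proof of Theorem \ref{thm1.2}(b): since $K[[t]]$ is a domain $\ker\varphi$ is prime; it is $D$-stable because $\varphi(D(\alpha))=\partial_t(\varphi(\alpha))$; and it is the unique maximal $D$-stable ideal contained in $\idealp$. Localizing, $\varphi$ induces the solution $\varphi_\idealp:R_\idealp\to K[[t]]$ through $\idealp R_\idealp$, and the same reasoning shows $\ker\varphi_\idealp$ is the unique maximal $D$-stable ideal of $R_\idealp$ contained in the maximal ideal $\idealp R_\idealp$. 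The decisive consequence is that every proper $D$-stable ideal of $R_\idealp$ lies in $\idealp R_\idealp$, hence in $\ker\varphi_\idealp$.

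From this, (a)$\Leftrightarrow$(b) is immediate: $(R_\idealp,D)$ is simple iff it has no nonzero proper $D$-stable ideal iff $\ker\varphi_\idealp=(0)$ iff $\varphi_\idealp$ is injective (note $\varphi_\idealp\neq 0$, as $\varphi_\idealp(1)=1$). For (b)$\Leftrightarrow$(c) I would compute the image of $\varphi_\idealp^*$ directly: $\spec K[[t]]=\{(0),(t)\}$, and $\varphi_\idealp^*$ sends $(0)\mapsto\ker\varphi_\idealp$ and $(t)\mapsto\idealp R_\idealp$, so the image is $\{\ker\varphi_\idealp,\idealp R_\idealp\}$ with closure $V(\ker\varphi_\idealp)$. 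Thus density means $\ker\varphi_\idealp$ lies in the nilradical of $R_\idealp$; together with reducedness this forces $\ker\varphi_\idealp=(0)$, i.e. (b), while conversely (b) makes $R_\idealp$ a subring of the domain $K[[t]]$, hence reduced, with dense image. For (b)$\Leftrightarrow$(d) the reverse implication is quick: if $u\notin\idealp$ kills $\ker\varphi$ then every element of $\ker\varphi$ dies in $R_\idealp$, so $\ker\varphi_\idealp=(0)$. For the forward implication, injectivity makes $R_\idealp$ a domain, so $\ker\varphi=\ker(R\to R_\idealp)$ is the unique minimal prime contained in $\idealp$; and since $R$ is Noetherian, $\ker\varphi$ is finitely generated, each generator being annihilated by some element outside $\idealp$, whose product $u$ (still outside $\idealp$, as $\idealp$ is prime) annihilates $\ker\varphi$.

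For (e), with $R$ of finite type, I would once more reduce to (b). Injectivity of $\varphi_\idealp$ is equivalent to $R_\idealp$ being a domain, i.e. to $R_\idealp$ reduced with $\spec R_\idealp$ irreducible; the latter says exactly that $\idealp$ contains a unique minimal prime, namely $\ker\varphi$, equivalently that $V(\idealp)$ lies in a unique irreducible component $X=V(\ker\varphi)$ of $\spec R$. It then remains to match the numerical condition, and the heart of the matter is the identity
\[\trd_K\varphi(R)=\dim R/\ker\varphi-\dim R/\idealp=\codim(V(\idealp),V(\ker\varphi)).\]
Here $\varphi$ factors as $R\twoheadrightarrow R/\ker\varphi\hookrightarrow K[[t]]$ with $R/\ker\varphi$ a finitely generated $k$-domain, so $\dim R/\ker\varphi-\dim R/\idealp=\operatorname{ht}(\idealp/\ker\varphi)$ because such rings are catenary and equidimensional. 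To reach the transcendence degree I would base change to $K$: since $k$ is algebraically closed, $(R/\ker\varphi)\otimes_k K$ is again a domain (geometric integrality) of dimension $\dim R/\ker\varphi$, the $K$-algebra $K[\varphi(R)]$ is a quotient of it whose Krull dimension is $\trd_K\varphi(R)$, and reduction modulo $t$ is meant to pin the relevant prime to $\idealp/\ker\varphi$.

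The step I expect to be the main obstacle is precisely this transcendence-degree computation and its packaging into (e). One must be careful that $\trd_K\varphi(R)$ is read relative to the image $R/\idealp\subseteq K$ of $\varphi(R)$ under reduction modulo $t$, i.e. as the relative dimension of $\varphi(R)$ over its generic point: the naive transcendence degree of the compositum $K\cdot\varphi(R)$ can strictly exceed $\codim(V(\idealp),V(\ker\varphi))$ exactly when $\idealp$ is already a minimal prime, so the prime cut out in $(R/\ker\varphi)\otimes_k K$ need not have height $\dim R/\idealp$ in that degenerate case and deserves separate attention. Once the identity is secured, the uniqueness clause is handled by running over the minimal primes $\idealp_0\subseteq\idealp$ and comparing $\codim(V(\idealp),V(\idealp_0))=\dim R/\idealp_0-\dim R/\idealp$ with $\trd_K\varphi(R)$: equality singles out the components with $\dim R/\idealp_0=\dim R/\ker\varphi$, and reducedness of $R_\idealp$ forces $\ker\varphi$ to be the unique such minimal prime, returning us to (b).
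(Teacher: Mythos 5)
Your treatment of (a)--(d) is correct and follows essentially the paper's own route: identify $\ker\varphi_\idealp$ with the largest $D$-stable proper ideal of $R_\idealp$ (Seidenberg's maximal $D$-stable ideal together with the fact that $\partial_t$ stabilizes no nonzero proper ideal of $K[[t]]$), deduce (a)$\Leftrightarrow$(b), compute the closure of the two-point image of $\varphi_\idealp^*$ for (c), and pass between $\ker\varphi$ and $\ker\varphi_\idealp$ by localization and Noetherian finiteness for (d); the paper words this last step via the homeomorphism $\spec(R_\idealp)\simeq\{\idealq\in\spec(R):\idealq\subset\idealp\}$, but the content is identical.

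The difficulties are concentrated in (e), exactly where you predicted, but your diagnosis is off in two ways. First, a local slip: ``injectivity of $\varphi_\idealp$ is equivalent to $R_\idealp$ being a domain'' is only an implication; for $R=k[x,y]$, $D=\partial_x$, $\idealp=(x,y)$, the unique solution $x\mapsto t$, $y\mapsto 0$ has $\ker\varphi_\idealp=(y)R_\idealp\neq 0$ although $R_\idealp$ is a domain. What is true, and what the converse direction of (e) must use, is (b) $\Leftrightarrow$ ($R_\idealp$ reduced and $\ker\varphi$ is the unique minimal prime contained in $\idealp$). Second, and more seriously: the identity $\trd_k\varphi(R)=\trd_K\varphi(R)+\dim R/\idealp$, which you correctly isolate as the crux and which the paper asserts as (\ref{eq2.2}) without proof, is false under the natural reading of $\trd_K\varphi(R)$ (the transcendence degree over $K$ of the elements of $\varphi(R)$ inside $K((t))$) --- and not only when $\idealp$ is minimal, as you claim. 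Take $R=k[x_1,x_2,x_3]$, $D=\partial_{x_1}+x_2x_3\partial_{x_3}$, $\idealp=(x_1)$, $K=k(x_2,x_3)$. The solution of Lemma \ref{lem1.1} is $x_1\mapsto t$, $x_2\mapsto x_2$, $x_3\mapsto x_3e^{x_2t}$. Linear independence of the $e^{jx_2t}$ over $K(t)$ gives $\ker\varphi=0$, so $\idealp$ is not minimal and (a)--(d) all hold: $R_\idealp$ is a DVR and $D(x_1^n)=nx_1^{n-1}\notin(x_1^n)R_\idealp$. Yet $e^{x_2t}$ is transcendental over $K(t)$ (an algebraic relation of minimal degree would force $c'/c=(n-i)x_2$ for some nonzero $c\in K(t)$ and some $i<n$, impossible by comparing degrees of numerator and denominator), so $\trd_K\varphi(R)=2$ while $\codim(V(\idealp),\spec(R))=1$: assertion (e) fails while (a) holds. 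So with this reading the equivalence with (e) is simply false, and neither your sketch nor the paper's argument can be completed; the statement must be repaired, either by restricting to maximal ideals $\idealp$ (then $K=k$, and (\ref{eq2.2}) is the standard dimension formula for finitely generated domains --- the setting of \cite{BLL} and Remark \ref{rem_BLL}), or by adopting the relative reading you propose at the end, $\trd_K\varphi(R):=\dim R/\ker\varphi-\dim R/\idealp$, under which (\ref{eq2.2}) is a tautology and the component-comparison argument goes through. Your instinct that this step was the main obstacle was sound; the failure is just broader than the degenerate case you singled out.
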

\begin{proof}

Since $\varphi$ is nontrivial, then $\varphi_\idealp$ is too and hence $\ker\varphi_\idealp\subsetneq\idealp R_\idealp$.   As we noticed in the proof of Theorem \ref{thm1.2}, $\ker\varphi_\idealp$ is the biggest ideal in $R_\idealp$ which is stable under $D$. We deduce that $(a)$ and $(b)$ are equivalent. 

On the other hand the image of  $\varphi_\idealp^*$ is dense in $\spec(R_\idealp)$ if and only if  all element in $\ker\varphi_\idealp$ is nilpotent (see \cite[Chap. 1, Exercise. 21]{AM}), hence $(b)$ and $(c)$ are equivalent. 

Notice that the canonical map  $\lambda_\idealp:R\to R_\idealp$ induces a homeomorphism 
\begin{equation}\label{eq2.1}
\spec(R_\idealp)\simeq \{\idealq\in\spec(R); \idealq\subset\idealp\},
\end{equation} 
and consider the commutative diagram
\[\xymatrix{ &\spec(R_\idealp)\ar@{->}[d]^{\lambda_\idealp}\\
\spec(K[[t]])\ar@{->}[r]^{\varphi^*}\ar@{->}[ur]^{\varphi_\idealp^*}&\spec(R)}\]
Since the images of $\varphi^*$ and $\varphi_\idealp^*$ are dense in $V(\ker\varphi)$ and $V(\ker\varphi_\idealp)$, respectively, we deduce that the image of $\varphi_\idealp^*$ is dense in $\spec(R_\idealp)$ if and only if $V(\varphi_\idealp^*)=\spec(R_\idealp)$ if and only if  the right-hand side in (\ref{eq2.1}) is contained and dense in $V(\ker\varphi)$. This is equivalent to say that $\ker\varphi$ is the unique minimal prime contained in $\idealp$. Moreover, since the extension of $\ker\varphi$ in $R_\idealp$ is $\ker\varphi_\idealp$ we easily deduce that $(c)$ and $(d)$ are equivalent. This completes the first part of the proof.

Now suppose that $R$ is of finite type. Hence  $\dim R/ \ker\varphi=\trd_k\varphi(R)=\trd_K \varphi(R)+\dim R/\idealp$. In other words
\begin{equation}
\trd_K \varphi(R)=\codim(V(\idealp),V(\ker\varphi)).\label{eq2.2}
\end{equation}

If we assume that assertions $(a)$ to $(d)$ hold, then $X=V(\ker\varphi)$ is the  unique component of $\spec(R)$ containing $V(\idealp)$. From (\ref{eq2.2}) we know that component has the correct dimension, then $(e)$ holds. 

Conversely, assume that $(e)$ holds and notice that  $V(\ker\varphi)$ is an irreducible closed set in $\spec(R)$, which contains $V(\idealp)$. Then (\ref{eq2.2}) implies  $X=V(\ker\varphi)$, from which it follows that $\ker\varphi$ is the unique minimal prime contained in $\idealp$.  From (\ref{eq2.1}) we deduce, as before, that the image of $\varphi_\idealp^*$ is dense in $\spec(R_\idealp)$, i.e. all element in $\ker\varphi_\idealp$ is nilpotent. Thus $\varphi_\idealp$ is one to one which completes the proof.   
\end{proof}

\begin{exa}\label{exa2.1}\emph{Locally nilpotent derivations}. Let $(R,D)$ be a differential ring where $D$ is locally nilpotent, i.e. for each $a\in R$ there exists a positive integer $n$ such that $D^n(a)=0$. Suppose that $R$ is a (Noetherian) local ring with maximal ideal $\idealm$. Let $\ell$ be the minimum positive integer such that $D^\ell(\idealm)\subset \idealm$. Hence $D$ stabilizes the nonzero ideal generated by $D^{\ell-1}(\idealm)$. 

Suppose $\ell>1$ and consider a nontrivial solution of $D$ passing through $\idealm$.  From  Theorem \ref{thm1.2} and its proof we know that the kernel of such a solution coincides with the biggest ideal $\idealq\subset \idealm$ which is stable under $D$. Using the solution given by Lemma \ref{lem1.1} we deduce
\[\idealq=\{a\in R; D^{i}(a)\in\idealm, i=0,\ldots,\ell-1\}.\]

By using theorem  \ref{thm2.1} we conclude that  $(R,D)$ is simple if and only if one of the following equivalent conditions holds:
\begin{itemize}
\item there exist $a_1,\ldots,a_s\in R$ and $x_1,\ldots,x_s\in\idealm$ such that $\sum_{i=1}^s a_iD^{\ell-1}(x_i)=1$.

\item $a\in\idealm, D(a)\in\idealm,\ldots, D^{\ell-1}(a) \in\idealm$ imply $a=0$.
\end{itemize}
\end{exa}

The following corollary generalizes the results in \cite[\S 1]{BLL} (see Remark \ref{rem_BLL}).

\begin{cor}\label{cor2.1_1}
Let $R$ be a $k$-algebra of finite type without zero divisors and let $\idealp\in\spec(R)$.  If  $D\in\derk(R)$ is a derivation such that $D(\idealp)\not\subset \idealp$, then there is at least a nontrivial solution $\varphi:R\to K[[t]]$ passing through $\idealp$ and  the following assertions are equivalent:

(a) $(R_\idealp,D)$ is simple.

(b) $\varphi_\idealp$ is one to one.

(c) the image of $\varphi_\idealp^*$ is dense in $\spec(R_\idealp)$.

(d) $\trd_K\varphi(R)=\codim(V(\idealp),\spec(R))$.
\end{cor}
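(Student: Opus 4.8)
The plan is to obtain everything as a specialization of Theorem \ref{thm2.1}, the hypothesis that $R$ be a domain being precisely what streamlines the statement.

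First I would establish existence of a nontrivial solution. The hypothesis $D(\idealp)\not\subset\idealp$ furnishes an element $a\in\idealp$ with $D(a)\notin\idealp$, so that $\epsilon_\idealp(D(a))\neq 0$ in $k(\idealp)$ and hence $\epsilon_\idealp\circ D\neq 0$. By Theorem \ref{thm1.2}(a) this is exactly the condition guaranteeing that $D$ admits a nontrivial solution $\varphi:R\to K[[t]]$ passing through $\idealp$. This proves the existence assertion and places us squarely in the hypotheses of Theorem \ref{thm2.1}.

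Next I would invoke that theorem. Since $R$ has no zero divisors, every localization $R_\idealp$ is again a domain, hence reduced; thus the reducedness clause appearing in parts (c) and (e) of Theorem \ref{thm2.1} holds automatically and can be suppressed. Consequently the Corollary's assertions (a), (b), (c) are literally conditions (a), (b), (c) of Theorem \ref{thm2.1}, and are therefore equivalent.

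The only step needing care is to match the Corollary's (d) with condition (e) of Theorem \ref{thm2.1}. Here I would use that $R$ being a domain forces $\spec(R)$ to be irreducible, so that its unique irreducible component is $\spec(R)=V((0))$ itself; in particular the unique irreducible component $X$ containing $V(\idealp)$ is forced to be $X=\spec(R)$. Under this identification condition (e) reads $\trd_K\varphi(R)=\codim(V(\idealp),\spec(R))$, which is exactly the Corollary's (d); since Theorem \ref{thm2.1} yields (e)$\Leftrightarrow$(a) whenever $R$ is of finite type, all four assertions become equivalent. I do not anticipate a genuine obstacle: the mathematical content resides entirely in Theorem \ref{thm2.1}, and the role of the domain hypothesis is merely to render the reducedness and the uniqueness-of-component conditions vacuous, thereby simplifying statements (c) and (e)/(d).
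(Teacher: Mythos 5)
Your proposal is correct and follows the same route as the paper: deduce $\epsilon_\idealp\circ D\neq 0$ to get a nontrivial solution via Theorem \ref{thm1.2}(a), then specialize Theorem \ref{thm2.1} using the domain hypothesis. The paper's own proof is just a terser version of this, leaving implicit the two points you spell out (reducedness of $R_\idealp$ is automatic, and the unique irreducible component containing $V(\idealp)$ must be $\spec(R)$ itself, so (e) collapses to (d)).
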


\begin{proof}
Since $D(\idealp)\not\subset \idealp$ implies $\epsilon_{\idealp}\circ D\neq 0$ the existence of a nontrivial solution passing through $\idealp$ is assured by Theorem \ref{thm1.2}. Taking into account that in the present case $R$, and then $R_\idealp$, have no zero divisors,  the corollary follows readily from Theorem \ref{thm2.1}.
\end{proof}

\begin{rem}\label{rem_BLL}
If $R=k[x,y_1,\ldots,y_r]$, $\idealp$ is the maximal ideal $\idealm=(x-\alpha,y_1-\beta_1,\ldots,y_r-\beta_r)$, $(\alpha,\beta_1,\ldots,\beta_r)\in k^{r+1}$, and $D\in\derk(R)$ is a derivation, then $D(\idealm)\not\subset\idealm$ if and only if $D=g\partial_x+\sum_{i=1}^r f_i\partial_{y_i}$, where $g,f_1,\ldots,f_r\in k[x,y_1,\ldots,y_r]$ are polynomials not all of them belonging to $\idealm$. From the unique nontrivial solution for $(R,D)$, passing through $\idealm$, we obtain a solution $\varphi_\idealm$ of $(R_\idealm,D)$. If we put $x(t):=\varphi_\idealm(x), y_i(t):=\varphi_\idealm(y_i), i=1,\ldots,r$, then we may read Corollary \ref{cor2.1_1} as saying $(R_\idealm, D)$ is simple if and only if $x(t),y_1(t),\ldots,y_r(t)$ are transcendent over $k$.    
\end{rem}

The second part of the next result is essentially \cite[Thm. 2]{Hart74}.

\begin{cor}\label{cor2.1_2}
Let $S$ be a Noetherian local $k$-algebra with maximal ideal $\idealm$ and let $D\in\derk(S)$. Then $(S,D)$ is simple if and only if there is a one to one  solution passing through $\idealm$. In particular, $(S, D)$ is isomorphic to a differential ring $(S_0, D_0)$, where $S_0$ is a $k$-subalgebra of $K[[t]]$, $K=k(\idealm)$, which is stable under $\partial_t$ and $D_0:=\partial_t|_{S_0}$.   
\end{cor}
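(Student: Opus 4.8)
The plan is to funnel everything through the single ideal $\idealq\subset\idealm$, the largest $D$-stable ideal contained in $\idealm$, whose existence is guaranteed by Noetherianity and which already plays the central role in the proof of Theorem~\ref{thm1.2}(b). I will use two facts from \S 2: by Lemma~\ref{lem1.1} a solution $\varphi\colon S\to K[[t]]$ passing through $\idealm$ always exists, and, as established in the proof of Theorem~\ref{thm1.2}(b), \emph{every} solution passing through $\idealm$ has $\ker\varphi=\idealq$. Because $S$ is local, the localization at $\idealm$ is the identity, so $(S_\idealm,D)=(S,D)$ and $\varphi_\idealm=\varphi$; this lets me read the localized statements of \S 2 off directly on $S$.

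I would first establish that $(S,D)$ is simple if and only if $\idealq=0$. One implication is immediate, since $\idealq$ is a $D$-stable ideal with $\idealq\subset\idealm\subsetneq S$, so simplicity forces $\idealq=0$. For the converse I would use locality in an essential way: every proper ideal of $S$ lies in $\idealm$, so any proper $D$-stable ideal $\ideala$ satisfies $\ideala\subset\idealq$, because $\ideala+\idealq$ is again $D$-stable and contained in $\idealm$ and hence equals $\idealq$ by maximality; thus $\idealq=0$ forces every proper $D$-stable ideal to vanish. Combining this with the two facts above, $\idealq=0$ is equivalent to $\ker\varphi=0$, i.e.\ to $\varphi$ being one to one, and since all solutions share the kernel $\idealq$, the existence of \emph{some} one-to-one solution is again equivalent to $\idealq=0$. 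Chaining these equivalences gives the first assertion. As a cross-check, when $\epsilon_\idealm\circ D\neq0$ one can instead quote the equivalence $(a)\Leftrightarrow(b)$ of Theorem~\ref{thm2.1}, applied at $\idealp=\idealm$, verbatim.

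For the \textbf{in particular} clause I would transport the differential structure along a one-to-one solution. Assuming $(S,D)$ simple, take the one-to-one solution $\varphi\colon S\to K[[t]]$ produced above and set $S_0:=\varphi(S)$. The solution identity $\partial_t\circ\varphi=\varphi\circ D$ gives $\partial_t(S_0)=\varphi(D(S))\subset\varphi(S)=S_0$, so $S_0$ is a $\partial_t$-stable $k$-subalgebra of $K[[t]]$; setting $D_0:=\partial_t|_{S_0}$, the same identity shows that $\varphi\colon S\to S_0$ is a $k$-algebra isomorphism intertwining $D$ and $D_0$, i.e.\ an isomorphism of differential rings $(S,D)\to(S_0,D_0)$. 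Since $S_0\cong S$ is Noetherian, $(S_0,D_0)$ is indeed a differential ring in the required sense.

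I do not expect a genuine obstacle, as the statement is largely a repackaging of Theorems~\ref{thm1.2} and~\ref{thm2.1}. The one step requiring real care is the equivalence between simplicity of $(S,D)$ and the vanishing $\idealq=0$: its converse direction is exactly where the hypothesis that $S$ is \emph{local} (not merely Noetherian) is used, since it confines every proper $D$-stable ideal to $\idealm$ and hence to $\idealq$. If one prefers to route the argument entirely through Theorem~\ref{thm2.1}, the subtlety instead migrates to the degenerate case $\epsilon_\idealm\circ D=0$ (equivalently $\idealq=\idealm$), which must then be handled separately and which corresponds to $S$ being a field.
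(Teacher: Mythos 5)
Your proof is correct. It is built from the same ingredients as the paper's, but assembled differently: the paper's own proof is three lines long, arguing that simplicity forces $D(\idealm)\not\subset\idealm$, hence a nontrivial solution exists by Theorem~\ref{thm1.2}(a), and then quoting the equivalence $(a)\Leftrightarrow(b)$ of Theorem~\ref{thm2.1} at $\idealp=\idealm$ (using $S_\idealm=S$), finishing with $S_0=\varphi(S)$ exactly as you do. You instead inline the mechanism that powers Theorem~\ref{thm2.1}: you work directly with the largest $D$-stable ideal $\idealq\subset\idealm$, prove that simplicity is equivalent to $\idealq=0$ via locality, and combine this with the two facts that a solution always exists (Lemma~\ref{lem1.1}) and that every solution through $\idealm$ has kernel exactly $\idealq$ (proof of Theorem~\ref{thm1.2}(b)). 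What your packaging buys is uniformity in the degenerate cases: Theorem~\ref{thm2.1} carries the standing hypothesis that a \emph{nontrivial} solution exists, and the paper's opening claim that simplicity implies $D(\idealm)\not\subset\idealm$ silently excludes the case $\idealm=(0)$, i.e.\ $S$ a field, where $\idealm$ is trivially $D$-stable yet $(S,D)$ is simple; your argument needs no nontriviality hypothesis and covers this case without comment ($\idealq=0$ and the exponential solution is injective). The treatment of the ``in particular'' clause is identical to the paper's, merely written out in full.

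One blemish, sitting in your closing side remark rather than in the proof proper: the parenthetical ``$\epsilon_\idealm\circ D=0$ (equivalently $\idealq=\idealm$)'' is not an equivalence in the generality of this corollary. Indeed $\epsilon_\idealm\circ D=0$ means $D(S)\subset\idealm$, which does imply $D(\idealm)\subset\idealm$, i.e.\ $\idealq=\idealm$; but the converse fails whenever the residue field $K=k(\idealm)$ admits a nonzero induced derivation. For instance, take $S=k(y)[x]_{(x)}$ with $D$ the extension of $\partial_y$ satisfying $D(x)=0$: then $D(\idealm)\subset\idealm$, so $\idealq=\idealm$, yet $D(y)=1$ gives $\epsilon_\idealm\circ D\neq 0$. (The two conditions do coincide when $K=k$, e.g.\ at maximal ideals of finite-type algebras, which is presumably the picture you had in mind.) Since your main argument never uses this equivalence, the proof itself stands.
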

\begin{proof}
Note that simplicity implies $D(\idealm)\not\subset\idealm$, from which we know $D$ admits nontrivial solutions (Theorem \ref{thm1.2}(a)).  By Theorem \ref{thm2.1} we deduce $S$ is $D$-simple if and only if there exists a one to one solution, say  $\varphi:S\to K[[t]]$. For the rest of the proof we take $S_0=\varphi(S)$, and the rest of the assertion is essentially trivial. 
\end{proof}

For the completion of a  $k$-algebra of finite type, with respect to a maximal ideal, simplicity is quite rare. In fact, suppose $R=k[[x_1,\ldots,x_n]]$ and let $\varphi:R\to k[[t]]$ be the unique solution passing through $\idealm=(x_1,\ldots,x_n)$, associated to a derivation $D$. Corollary \ref{cor2.1_2} implies $\ker\varphi\neq 0$ unless $n=1$ and $D=f\partial_{x_1}$ for some $f\in k[[x_1]]$ with $f(0)\neq 0$. More generally, we have:

\begin{thm}\label{thm2.2}
Let $R$ be the quotient of $k[[x_1,\ldots,x_n]]$ by an ideal $I$. If $D\in\derk(R)$, then $(R,D)$ is simple if and only if $D$ lifts to a derivation $\widehat{D}\in\derk(k[[x_1,\ldots,x_n]])$ which admits  a unique solution $\widehat{\varphi}:k[[x_1,\ldots,x_n]]\to k[[t]]$, passing through the maximal ideal $(x_1,\ldots,x_n)$, such that $\ker\widehat{\varphi}=I$.
\end{thm}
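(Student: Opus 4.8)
The plan is to reduce the whole statement to Corollary \ref{cor2.1_2} together with the uniqueness part of Theorem \ref{thm1.2}(c), by setting up a dictionary between solutions of $D$ on $R$ and solutions of lifts $\widehat D$ on $P:=k[[x_1,\ldots,x_n]]$. I write $\pi:P\to R$ for the quotient map, $\idealn=(x_1,\ldots,x_n)$ for the maximal ideal of $P$ and $\idealm=\idealn/I$ for that of $R$; since $k$ is algebraically closed one has $k(\idealm)=k$, so every solution through $\idealm$ (resp. $\idealn$) takes values in $k[[t]]$. I would first observe that, because $P$ is a power series ring, the clause ``$\widehat D$ admits a unique solution $\widehat\varphi$ through $\idealn$'' is automatic by Theorem \ref{thm1.2}(c); hence the genuine content of the right-hand side is the single equality $\ker\widehat\varphi=I$ for some lift $\widehat D$ of $D$.

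Next I would construct a lift and show one always exists. Given $D\in\derk(R)$, choose for each $i$ an element $h_i\in P$ with $\pi(h_i)=D(\pi(x_i))$ and set $\widehat D=\sum_{i=1}^n h_i\,\partial/\partial x_i\in\derk(P)$. The maps $\pi\circ\widehat D$ and $D\circ\pi$ are both $\pi$-derivations $P\to R$ that agree on the topological generators $x_1,\ldots,x_n$; by continuity (the coefficient-by-coefficient reasoning already used in the proof of Theorem \ref{thm1.2}(c)) they must coincide on all of $P$. Consequently, for $f\in I=\ker\pi$ one gets $\pi(\widehat D(f))=D(\pi(f))=0$, i.e. $\widehat D(I)\subseteq I$, so $\widehat D$ is an honest lift of $D$.

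Then I would show that the unique solution of the lift descends. Let $\widehat\varphi:P\to k[[t]]$ be the unique solution of $\widehat D$ through $\idealn$. Using the explicit form $\widehat\varphi(f)=\sum_j \frac{t^j}{j!}\epsilon_\idealn(\widehat D^j(f))$ from Lemma \ref{lem1.1}, and the inclusions $\widehat D^j(I)\subseteq I\subseteq\idealn=\ker\epsilon_\idealn$, every Taylor coefficient of $\widehat\varphi(f)$ vanishes for $f\in I$; thus $I\subseteq\ker\widehat\varphi$ and $\widehat\varphi$ factors as $\widehat\varphi=\varphi\circ\pi$ for a unique $k$-homomorphism $\varphi:R\to k[[t]]$. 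A direct check gives $\varphi\circ D=\partial_t\circ\varphi$ and $\varphi^{-1}((t))=\idealm$, so $\varphi$ is a solution of $D$ through $\idealm$; by the uniqueness in Theorem \ref{thm1.2}(c) it is \emph{the} unique such solution, and in particular it does not depend on the chosen lift. Since $\pi$ is surjective with kernel $I$, one has $\ker\widehat\varphi=\pi^{-1}(\ker\varphi)$, so $\ker\widehat\varphi=I$ if and only if $\ker\varphi=0$, i.e. if and only if $\varphi$ is one-to-one.

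With this dictionary the equivalence is purely formal: by Corollary \ref{cor2.1_2}, $(R,D)$ is simple if and only if there is a one-to-one solution through $\idealm$, which by uniqueness means precisely that the canonical $\varphi$ above is one-to-one, which in turn is equivalent to $\ker\widehat\varphi=I$ for our lift (and, by lift-independence, for any lift). I expect the main obstacle to be the soft but essential continuity arguments of the second and third steps: justifying that a $\pi$-derivation of the power series ring is determined by its values on $x_1,\ldots,x_n$ (so that $\widehat D$ truly lifts $D$ and stabilizes $I$) and that $\widehat\varphi$ descends to a bona fide solution of $D$ on $R$. Once these are pinned down, everything reduces to bookkeeping on top of Corollary \ref{cor2.1_2} and Theorem \ref{thm1.2}.
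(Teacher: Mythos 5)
Your proposal is correct and takes essentially the same route as the paper: lift $D$ to $k[[x_1,\ldots,x_n]]$, identify the (unique) solution upstairs with the solution of $D$ downstairs via the quotient map, and conclude from Corollary \ref{cor2.1_2}. The paper's proof is simply a terser version of yours — it asserts the lifting and the compatibility of solutions that you construct and verify explicitly, and cites Theorem \ref{thm1.2}(b) where you use the uniqueness statement of Theorem \ref{thm1.2}(c).
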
  
\begin{proof}
Note that each element in $\derk(R)$ comes from an element in $\derk(k[[x_1,\ldots,x_n]])$ which stabilizes $I$ and that under this correspondence we obtain compatible solutions. Recalling that two solutions passing through the maximal ideal have the same kernel (Theorem \ref{thm1.2}b) the result follows from Corollary \ref{cor2.1_2}.
\end{proof}

\end{document}